\documentclass[11pt]{article}
\addtolength{\voffset}{-3cm}  
\addtolength{\textheight}{5cm} 
\addtolength{\textwidth}{2cm}
\addtolength{\hoffset}{-1cm}

\usepackage{amsmath, amsthm, amstext}
\usepackage{amssymb, array, amsfonts}
\usepackage[utf8]{inputenc}
\usepackage[english]{babel}
\usepackage{enumerate}
\usepackage{graphicx}
\usepackage{mathrsfs}

\usepackage{pdfpages}

\inputencoding{utf8}

\usepackage{pagecolor}
\definecolor{back}{RGB}{240, 238, 220}

\pagestyle{plain}

\newtheorem{thm}{Theorem}
\newtheorem{lemma}{Lemma}

\title{Existence of a solution to the scattering problem for the ultrahyperbolic equation}
\author{M. N. Demchenko\footnote{St.~Petersburg Department of
V.\,A.~Steklov Institute of Mathematics of
the Russian Academy of Sciences, 
27 Fontanka, St.~Petersburg, Russia. E-mail: demchenko@pdmi.ras.ru}}
\date{}

\begin{document}
\maketitle
\begin{flushright}
{\em\large
\begin{tabular}{r}
To the memory of Natalia Yakovlevna Kirpichnikova
\end{tabular}
}
\end{flushright}

\begin{abstract}
We consider the ultrahyperbolic equation in the Euclidean space. 
The behavior at the infinity of a certain class of solutions is studied.
We examine the issue of existence of solutions to the scattering problem:
for a given asymptotics at the infinity the corresponding solution to the equation is constructed.

\medskip

\noindent \textbf{Keywords:} 
ultrahyperbolic equation, asymptotic behavior of a solution at the infinity, scattering problem.

\end{abstract}

\section{Introduction}
We study the ultrahyperbolic equation of the form
\begin{gather}
  (\Delta_y - \Delta_x) u = 0,
  \\
  \Delta_x = \partial_{x_1}^2 + \ldots + \partial_{x_d}^2, \quad \Delta_y = \partial_{y_1}^2 + \ldots + \partial_{y_n}^2, \notag
  \label{eqn}
\end{gather}
where a solution $u(x,y)$ is a function in
${\mathbb R}^d\times{\mathbb R}^n$, $d,n \geqslant 1$.
In the particular cases when either $d=1$ or $n=1$, this equation becomes the (hyperbolic) wave equation.

We shall consider $C^\infty$-smooth solutions to equation~(\ref{eqn}) exhibiting the following asymptotic behavior at the infinity
\begin{equation}
  u(s \theta, (s + p) \omega) = s^{-N/2+1} f(\theta,\omega,p)
  + O(s^{-N/2+1-\varepsilon}), \quad s\to +\infty,    
  \label{scat-data}
\end{equation}
with some $0<\varepsilon\leqslant 1/2$, $(\theta,\omega,p)\in S^{d-1}\times S^{n-1}\times{\mathbb R}$.
Here $N=d+n$, $S^{d-1}$ ($S^{n-1}$) is the unit sphere in ${\mathbb R}^d$ (${\mathbb R}^n$) centered at the origin.
The function $f(\theta,\omega,p)$ will be called the scattering data of a solution $u(x,y)$.
We shall find a class of solutions exhibiting the specified asymptotics, and, besides,
we shall construct a solution to equation~(\ref{eqn}) satisfying~(\ref{scat-data}) for a given function $f(\theta,\omega,p)$.
The latter means that the existence of a solution to the scattering problem for equation~(\ref{eqn}) will be established.

In order to formulate the regularity conditions on the scattering data,
we first clarify what we mean by the derivatives of functions on the unit sphere.
We shall consider continuations of such functions to the entire Euclidean space
homogeneous of degree $0$ (these will be denoted by the same symbols). 
By the corresponding partial derivatives we shall mean the ones of such continuations taken on the unit sphere.

We shall also use the Hilbert transform $H$ of a function on ${\mathbb R}$:
\begin{equation*}
  (H f)(p) = \frac{1}{\pi}\, \mathrm{v.p.} \int_{\mathbb R} \frac{f(p')}{p-p'} \, dp'.
\end{equation*}
In terms of the inverse Fourier transform (the definition of the Fourier transform is given in sec.~\ref{sec-family}),
this definition can be reformulated as follows
\begin{equation}
  (H f)\check{}\,(r) = (i\, {\rm sgn}\, r) \check f(r). 
  \label{hilbert-check}
\end{equation}

Now we can formulate the theorem of existence of a solution to the scattering problem for equation~(\ref{eqn}).

\begin{thm}\label{thm-exist}
Let a function $f(\theta, \omega, p)$ on $S^{d-1}\times S^{n-1}\times{\mathbb R}$ satisfy the following conditions
\begin{gather}
  \partial^\alpha_{\theta,\omega}f(\theta, \omega, \pm\infty) = 0, \label{f-reg1}
  \\
  |\partial_p^k\partial^\alpha_{\theta,\omega} f(\theta, \omega, p)| \leqslant C_{k,\alpha} (1+|p|)^{-k-\varepsilon} 
  \label{f-reg2}
\end{gather}
for all $k\geqslant 1$, $|\alpha|\geqslant 0$, and some $0<\varepsilon\leqslant 1/2$,
and let the following relation hold:
\begin{equation}
  f(-\theta, -\omega, p) = (H^{d-n} f(\theta, \omega, \cdot))(-p).
  \label{f-hilbert}
\end{equation}
Then there exists a $C^\infty$-smooth solution $u(x,y)$ to equation~(\ref{eqn}) satisfying~(\ref{scat-data}).
\end{thm}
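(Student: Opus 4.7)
The plan is to realize $u$ as a continuous superposition of the plane-wave solutions $(x,y)\mapsto e^{ir(\theta\cdot x-\omega\cdot y)}$ of~(\ref{eqn}), and to choose the amplitude so that stationary-phase analysis on the ray $(s\theta_0,(s+p)\omega_0)$ reproduces the datum $f$. Concretely, I would set
\begin{equation*}
  u(x,y) = \iint_{S^{d-1}\times S^{n-1}}\!\int_{\mathbb R}
   |r|^{(N-2)/2}\,\psi(r)\,\check f(\theta,\omega,r)\,
   e^{ir(\theta\cdot x-\omega\cdot y)}\,dr\,d\sigma(\theta)\,d\sigma(\omega),
\end{equation*}
where $\check f$ is the inverse Fourier transform of $f$ in $p$ and $\psi(r)=c_N\exp\bigl(-i\pi(n-d)\,\mathrm{sgn}(r)/4\bigr)$ with a numerical constant $c_N$ to be fixed so as to cancel the Morse-index phase appearing below. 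Because each plane wave is annihilated by $\Delta_y-\Delta_x$, and since the decay of $\check f$ and its angular derivatives (implied by (\ref{f-reg1})--(\ref{f-reg2})) legitimizes differentiation under the integral sign, $u$ is then a $C^\infty$ solution of (\ref{eqn}).

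Substituting $(x,y)=(s\theta_0,(s+p)\omega_0)$ turns the angular integration into an oscillatory integral on $S^{d-1}\times S^{n-1}$ with phase $rs\,\Phi(\theta,\omega)$, where $\Phi(\theta,\omega)=\theta\cdot\theta_0-\omega\cdot\omega_0$. The critical set of $\Phi$ consists of the two points $(\theta_0,\omega_0)$ and $(-\theta_0,-\omega_0)$, the Hessians are non-degenerate with signatures $n-d$ and $d-n$ respectively (each flipping sign with $\mathrm{sgn}(r)$), and the stationary-phase formula on an $(N-2)$-dimensional manifold produces a leading contribution of order $s^{-(N-2)/2}$. At $(\theta_0,\omega_0)$ the factor $|r|^{(N-2)/2}\psi(r)$ cancels both the $|r|^{-(N-2)/2}$ and the Morse-index phase coming from stationary phase, and Fourier inversion in $r$ produces $\tfrac12\, s^{-(N-2)/2}f(\theta_0,\omega_0,p)$ for an appropriate choice of $c_N$. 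At $(-\theta_0,-\omega_0)$ the Morse index carries the opposite sign and the linear term $-rp\,\omega\cdot\omega_0$ becomes $+rp$, so the same computation leaves an extra multiplier $(i\,\mathrm{sgn}(r))^{d-n}$ on the Fourier side; invoking (\ref{hilbert-check}) one recognises this second contribution as $\tfrac12\, s^{-(N-2)/2}(H^{d-n}f(-\theta_0,-\omega_0,\cdot))(-p)$, which by the compatibility condition (\ref{f-hilbert}), applied with $(\theta,\omega)\to(-\theta_0,-\omega_0)$, equals $\tfrac12\, s^{-(N-2)/2}f(\theta_0,\omega_0,p)$. Adding the two stationary-point contributions one recovers exactly the leading term in~(\ref{scat-data}).

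The main obstacle is the uniform remainder bound of order $s^{-(N-2)/2-\varepsilon}$. The classical stationary-phase remainder on $S^{d-1}\times S^{n-1}$ is of size $(|r|s)^{-(N-2)/2-1}$ times Sobolev norms of the amplitude, and the naive $r$-integration fails near $r=0$: the amplitude $|r|^{(N-2)/2}\check f(\theta,\omega,r)$ is, in general, only H\"older-continuous of exponent $\varepsilon$ at the origin, since the decay assumption (\ref{f-reg2}) with exponent $-k-\varepsilon$ translates, via standard Fourier-analytic arguments, into precisely this regularity of $\check f$ at $r=0$. The plan is therefore to split the $r$-integral at $|r|\asymp s^{-1}$: on $|r|\geqslant s^{-1}$ one applies several integrations by parts in $(\theta,\omega)$ away from the critical set, using the rapid decay of $\check f$ and its angular derivatives at large $|r|$ provided by (\ref{f-reg2}) for large $k$ to absorb the negative $|r|$-powers; on $|r|\leqslant s^{-1}$ one estimates directly, exploiting the prefactor $|r|^{(N-2)/2}$ together with the $\varepsilon$-H\"older control of $\check f$ at the origin to obtain the additional $s^{-\varepsilon}$. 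Carrying out this bookkeeping uniformly in $(\theta_0,\omega_0,p)$, and iterating it for all derivatives of the amplitude, is the principal technical task.
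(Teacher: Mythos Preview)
Your construction and overall strategy coincide with the paper's: define $u$ as a plane-wave superposition with amplitude $|r|^{N/2-1}\psi(r)\check f$, apply stationary phase on $S^{d-1}\times S^{n-1}$, and split the $r$-integral at $|r|\sim 1/s$ to control the remainder using the H\"older regularity of $\check f$ at $r=0$ (which the paper isolates as Lemmas~\ref{lemma-V-eps}--\ref{lemma-V-eps-high-order-final}). Your identification of the compatibility condition~(\ref{f-hilbert}) as the mechanism that makes the two ``good'' critical-point contributions agree is exactly right.

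There is, however, a concrete error. The phase $\Phi(\theta,\omega)=\theta\cdot\theta_0-\omega\cdot\omega_0$ has \emph{four} critical points on $S^{d-1}\times S^{n-1}$, namely $(\pm\theta_0,\pm\omega_0)$, not two: $\theta\mapsto\theta\cdot\theta_0$ is critical at $\theta=\pm\theta_0$ independently of what $\omega$ does. At the mixed points $(\mp\theta_0,\pm\omega_0)$ one has $\Phi=\mp 2$, so their stationary-phase contributions carry an extra factor $e^{\mp 2irs}$ and are \emph{not} a priori of lower order; the angular stationary-phase expansion alone gives them the same $s^{-(N-2)/2}$ weight as the two you kept. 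To dispose of them one must perform an additional integration by parts in $r$, exploiting the oscillation $e^{\mp 2irs}$ against the amplitude $r^{-N/2+1}A(\mp\theta_0,\pm\omega_0,r)$; the paper does precisely this (see the paragraph in Section~\ref{sec-asymp} treating ``the third term in figure brackets''), and it is here that the bound $|\partial_r A|\lesssim r^{N/2-3+\varepsilon}$ from~(\ref{U-cond-pointwise}) with $k=1$ is used to produce the required $O(s^{-N/2+1-\varepsilon})$. Once you insert this step, your argument goes through and matches the paper's.
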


Note that the Hilbert transform in condition~(\ref{f-hilbert}) is well-defined for functions sa\-tis\-fy\-ing~(\ref{f-reg1}), (\ref{f-reg2}).
This follows from Lemma~\ref{lemma-V-eps-high-order-final}, according to which the inverse Fourier transform of $f(\theta, \omega, \cdot)$
is a regular function in ${\mathbb R}$, and so the Hilbert transform may be defined by relation~(\ref{hilbert-check}).

Condition~(\ref{f-hilbert}) on the scattering data seems to be necessary for the existence of a solution from a reasonable class of functions.
In the case $d=3$, $n=1$ (which corresponds to the wave equation in the $3$-dimensional space), it takes the following form
\begin{equation*}
  f(-\theta, -\omega, p) = -f(\theta, \omega, -p)
\end{equation*}
(in this case, $\omega=\pm 1$),
and is automatically satisfied for all solutions to the Cauchy problem with smooth rapidly decaying data given
at some initial moment~\cite{Blag-scatter}.
Note also that in the case of arbitrary $d$, $n$,
we shall find a class of solutions exhibiting asymptotics~(\ref{scat-data}),
for which relation~(\ref{f-hilbert}) is also automatically satisfied (Theorem~\ref{thm-U-class}).

The problem for equation~(\ref{eqn}) connected with the scattering data in the hyperbolic case, when either $d=1$ or $n=1$,
was studied in a number of papers.
We mention the pioneering ones on this subject~\cite{Lax-Phillips, Blag-scatter}, and also papers~\cite{Moses, Kis, Plachenov, BV},
in which the asymptotic behavior of solutions to hyperbolic equations at the infinity was investigated.
The general case $d,n\geqslant 1$ is studied to much less extent. 
Note that the set of known well-posed problems for ultrahyperbolic equations 
is considerably less than
that for elliptic, parabolic, and hyperbolic equations.
Paper~\cite{Blag-char} deals with the characteristic problem for equation~(\ref{eqn}),
in which a solution $u(x,y)$ is to be determined in the region $|x|<|y|$ (or $|x|>|y|$) from its values
on the characteristic cone $|x|=|y|$.
The result of the paper just mentioned is the existence, uniqueness, and a formula for a solution to the problem in a certain class.
The paper~\cite{DMN} deals with 
the scattering problem for the ultrahyperbolic equation, which differs from~(\ref{eqn}) by a constant nonzero potential
(in the hyperbolic case, it becomes Klein-Gordon-Fock equation).

\section{A family of solutions exhibiting asymptotics~(\ref{scat-data})}  
\label{sec-family}
We use the following definition of the Fourier transform
\begin{gather*}
  \hat v(\xi) = \int_{{\mathbb R}^d} e^{-i\langle{}x,\xi\rangle} v(x)  dx,
  \quad
  v(x) = (2\pi)^{-d} \int_{{\mathbb R}^d} e^{i\langle{}x,\xi\rangle} \hat v(\xi)  d\xi
\end{gather*}
(here and further $\langle\cdot,\cdot\rangle$ denotes the standard inner product in the real Euclidean space;
angle brackets will also denote the pairing of a distribution with a test function).
For a function $v(x,y)$ in the ``spacetime'' ${\mathbb R}^d\times{\mathbb R}^n$,
it is convenient to define the Fourier transform as follows
\begin{gather*}
  \hat v(\xi,\eta) = \int_{{\mathbb R}^d\times{\mathbb R}^n} e^{i(-\langle{}x,\xi\rangle+\langle{}y, \eta\rangle)} v(x,y)  dx dy.
\end{gather*}
Then the inverse Fourier transform is given by the formula 
\begin{gather*}
    v(x,y) = (2\pi)^{-N} \int_{{\mathbb R}^d\times{\mathbb R}^n} e^{i(\langle{}x,\xi\rangle-\langle{}y, \eta\rangle)} \hat v(\xi,\eta) \, d\xi d\eta.
\end{gather*}

The Fourier transform $\hat u(\xi,\eta)$ of a solution $u(x,y)$ is supported on the cone $\{|\xi|=|\eta|\}$.
Formally, this can be written as the following relation
\begin{equation}
  \langle\hat u, \varphi\rangle = \int_{S^{d-1}} d\zeta \int_{S^{n-1}} d\sigma \int_0^\infty A(\zeta,\sigma,r) \varphi(r\zeta,r\sigma) \, dr
  \label{hatu-f}
\end{equation}
(here and further in similar situations, $d\zeta$ ($d\sigma$) denotes the surface measure on the sphere $S^{d-1}$ ($S^{n-1}$)),
where $\varphi(\xi,\eta)$ is an arbitrary test function from the Schwartz class $\mathcal{S}({\mathbb R}^d\times{\mathbb R}^n)$
with some function $A(\zeta,\sigma,r)$ in $S^{d-1}\times S^{n-1}\times {\mathbb R}_+$.

We shall consider functions $A(\zeta,\sigma,r)$ satisfying the following regularity condition for some $0<\varepsilon\leqslant 1/2$:
\begin{equation}
  |\partial^k_r\partial_{\zeta,\sigma}^\alpha A(\zeta,\sigma,r)| \leqslant C_{\ell,k,\alpha} r^{N/2-k-2+\varepsilon} (1+r)^{-\ell}, \quad \ell,k,|\alpha|\geqslant 0.
  \label{U-cond-pointwise}
\end{equation}
This condition provides that the right hand side of~(\ref{hatu-f}) is well-defined
and corresponds to a tempered distribution from $\mathcal{S}'({\mathbb R}^d\times{\mathbb R}^n)$.
The latter is a solution to equation~(\ref{eqn}).
Indeed, for a test function $\psi(x,y)$, we have
\begin{equation*}
  ((\Delta_y - \Delta_x) \psi)\check{}\,(\xi,\eta) = (\xi^2-\eta^2) \check\psi(\xi,\eta),
\end{equation*}
whence
\begin{equation*}
  \langle{}u, (\Delta_x - \Delta_y) \psi\rangle = \langle\hat u, (\xi^2-\eta^2) \check\psi\rangle.
\end{equation*}
The resulting expression is equal to zero, in view of~(\ref{hatu-f}).

\begin{thm}\label{thm-U-class}
  Let a function $A(\zeta,\sigma,r)$ satisfy condition~(\ref{U-cond-pointwise}) for some $0<\varepsilon\leqslant 1/2$.
  Then the function $u(x,y)$ defined by relation~(\ref{hatu-f})
  is a $C^\infty$-smooth solution to equation~(\ref{eqn}), and relation~(\ref{scat-data}) holds with
\begin{equation}
  f(\theta, \omega, p) = c \int_{\mathbb R} e^{-i r p} 
  \left(e^{i\pi (n-d)/4} \Theta(r) + e^{i\pi (d-n)/4} \Theta(-r)\right) |r|^{-N/2+1} A(\theta, \omega, r) \, dr,
  \label{fUR}
\end{equation}
where $c = (2\pi)^{-N/2-1}$,
and the function $A$ is continued to $r\in{\mathbb R}$ by the rule
\begin{equation}
  A(\zeta, \sigma, -r) = A(-\zeta, -\sigma, r). 
  \label{U-line}
\end{equation}
Furthermore, 
$f(\theta, \omega, p)$ is a $C^\infty$-smooth function on $S^{d-1}\times S^{n-1}\times{\mathbb R}$ satisfying
\begin{equation}
  f(\theta,\omega,\cdot) \in L_q({\mathbb R}), \quad (\theta,\omega) \in S^{d-1}\times S^{n-1}
  \label{fLq}
\end{equation}
for any $1/\varepsilon < q \leqslant \infty$,
and relation~(\ref{f-hilbert}) holds.
\end{thm}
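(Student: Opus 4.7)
The first two claims---that $u$ is $C^\infty$ and solves~(\ref{eqn})---have already been argued in the discussion preceding the theorem. Smoothness follows by differentiating under the integral sign in the inverse-Fourier representation
\begin{equation*}
u(x,y)=(2\pi)^{-N}\int_{S^{d-1}\times S^{n-1}}d\zeta\,d\sigma\int_0^{\infty}\! A(\zeta,\sigma,r)\,e^{ir(\langle\zeta,x\rangle-\langle\sigma,y\rangle)}\,dr,
\end{equation*}
since each $x$- or $y$-derivative inserts a factor $r\zeta_j$ or $r\sigma_j$ and~(\ref{U-cond-pointwise}) yields absolute convergence to any order.

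For the asymptotic~(\ref{scat-data}) the main tool is the method of stationary phase in the sphere variables, with $s$ as the large parameter. Setting $x=s\theta$, $y=(s+p)\omega$, one is led to analyse
\begin{equation*}
I_s(r)=\int_{S^{d-1}\times S^{n-1}}\! A(\zeta,\sigma,r)\,e^{isr(\langle\theta,\zeta\rangle-\langle\omega,\sigma\rangle)}\,e^{-irp\langle\omega,\sigma\rangle}\,d\zeta\,d\sigma.
\end{equation*}
The phase has four critical points $(\pm\theta,\pm\omega)$ on $S^{d-1}\times S^{n-1}$; only the diagonal pairs $(\theta,\omega)$ and $(-\theta,-\omega)$ carry critical value zero and therefore contribute at the leading order. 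In local coordinates $v\perp\theta$, $w\perp\omega$ near $(\theta,\omega)$ the phase reads $r(-|v|^2+|w|^2)/2$, so the Hessian is non-degenerate of signature $(n-d)\,\mathrm{sgn}(r)$; at $(-\theta,-\omega)$ the signature is the opposite $(d-n)\,\mathrm{sgn}(r)$. Stationary phase in dimension $N-2$ then gives the leading contribution
\begin{equation*}
I_s(r)\sim\Bigl(\frac{2\pi}{s}\Bigr)^{(N-2)/2}\!|r|^{-(N-2)/2}\!\Bigl(e^{i\pi(n-d)\mathrm{sgn}(r)/4}A(\theta,\omega,r)e^{-irp}+e^{i\pi(d-n)\mathrm{sgn}(r)/4}A(-\theta,-\omega,r)e^{irp}\Bigr).
\end{equation*}
Using~(\ref{U-line}) on the second summand and recombining the two half-line integrals in $r$ into a single integral over $\mathbb{R}$ reproduces exactly~(\ref{fUR}) with the constant $c=(2\pi)^{-N/2-1}$. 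The remaining contributions (non-critical sphere directions and the anti-diagonal critical pairs $(\theta,-\omega)$, $(-\theta,\omega)$, whose critical value is $\pm 2rs$) are handled by integration by parts in $(\zeta,\sigma)$ or in $r$.

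The main obstacle is the \emph{quantitative} error estimate, since~(\ref{U-cond-pointwise}) allows $A$ and its derivatives to blow up like $r^{N/2-2+\varepsilon}$ near $r=0$ and the stationary-phase remainders carry negative powers of $|r|$. My plan is to split the $r$-integration at the scale $r\sim 1/s$. On $0<r\lesssim 1/s$ a direct bound $|I_s(r)|\le\int|A|\lesssim r^{N/2-2+\varepsilon}$ integrates to $O(s^{-N/2+1-\varepsilon})$, and the corresponding piece of the leading term $s^{-(N-2)/2}|r|^{-(N-2)/2}|A|\lesssim s^{-(N-2)/2}r^{\varepsilon-1}$ integrates to the same order. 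On $r\gtrsim 1/s$ one step of stationary phase produces a remainder bounded by $C s^{-(N-2)/2-1}r^{\varepsilon-2}$, whose integral over $(1/s,\infty)$ is $O(s^{-N/2+1-\varepsilon})$; an analogous integration-by-parts argument in $r$ handles the anti-diagonal and non-critical contributions. Balancing these pieces gives precisely the desired error $O(s^{-N/2+1-\varepsilon})$, and this is the only step where $\varepsilon$ enters quantitatively.

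The remaining assertions follow directly from~(\ref{fUR}). Smoothness of $f$ in $(\theta,\omega)$ is inherited from $A$, while smoothness in $p$ is obtained by differentiating under the integral, since each additional factor $r^k$ is tamed near $r=0$ by $|r|^{-N/2+1}|A|\le Cr^{\varepsilon-1}$ and at infinity by the rapid decay built into~(\ref{U-cond-pointwise}). The inclusion~(\ref{fLq}) is the Hausdorff--Young inequality applied to~(\ref{fUR}): the bracketed function of $r$ lies in $L_{q'}(\mathbb{R})$ exactly when $q'<1/(1-\varepsilon)$, i.e.\ when $q>1/\varepsilon$. Finally,~(\ref{f-hilbert}) is a short Fourier-side check: (\ref{fUR}) gives $\check f(\theta,\omega,r)=c\bigl[e^{i\pi(n-d)/4}\Theta(r)+e^{i\pi(d-n)/4}\Theta(-r)\bigr]|r|^{-N/2+1}A(\theta,\omega,r)$, and substituting $(\theta,\omega)\to(-\theta,-\omega)$, changing $r\to -r$ and invoking~(\ref{U-line}) identifies the inverse Fourier transform of $f(-\theta,-\omega,\cdot)$ with $(-i\,\mathrm{sgn}(r))^{d-n}\check f(\theta,\omega,-r)$, which by~(\ref{hilbert-check}) is precisely~(\ref{f-hilbert}) on the Fourier side.
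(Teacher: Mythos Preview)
Your plan is correct and follows essentially the same route as the paper: stationary phase in the sphere variables at the four critical points $(\pm\theta,\pm\omega)$, the split of the $r$-integral at $r\sim 1/s$, integration by parts in $r$ to dispose of the anti-diagonal contributions carrying $e^{\pm 2irs}$, Hausdorff--Young for~(\ref{fLq}), and the Fourier-side verification of~(\ref{f-hilbert}). The only cosmetic difference is that the paper quotes the stationary-phase remainder at order $(rs)^{-(N-1)/2}$ (via H\"ormander's Theorem~7.7.5) rather than your full-step $(rs)^{-N/2}$; both integrate over $r>1/s$ to $O(s^{-N/2+1-\varepsilon})$.
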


Note that in view of inclusion~(\ref{fLq}), the Hilbert transform of the function $f(\theta,\omega,\cdot)$
occurring in relation~(\ref{f-hilbert}) is well-defined.

We shall derive asymptotics~(\ref{scat-data}) and formula~(\ref{fUR}) for solutions $u(x,y)$ of the form~(\ref{hatu-f})
with the use of the method of stationary phase.
To perform this, it is convenient to use the more explicit representation for such solutions.
According to~(\ref{hatu-f}), for a test function $\psi(x,y)$, we have
\begin{multline*}
  \langle{}u, \psi\rangle = \langle\hat u, \check\psi\rangle
  \\= \frac{1}{(2\pi)^N} \int_0^\infty dr \int_{S^{d-1}\times S^{n-1}} d\zeta d\sigma
  A(\zeta, \sigma, r) \int_{{\mathbb R}^d\times{\mathbb R}^n} e^{i r(\langle{}x, \zeta\rangle - \langle{}y, \sigma\rangle)} \psi(x,y) dx dy.
\end{multline*}
In view of condition~(\ref{U-cond-pointwise}), we obtain an absolutely convergent integral, which equals
\begin{equation*}
  \frac{1}{(2\pi)^N} \int_{{\mathbb R}^d\times{\mathbb R}^n} dx dy\, \psi(x,y) \int_0^\infty dr \int_{S^{d-1}\times S^{n-1}} 
  e^{i r(\langle{}x, \zeta\rangle - \langle{}y, \sigma\rangle)} A(\zeta, \sigma, r) d\zeta d\sigma,
\end{equation*}
whence
\begin{equation}
  u(x,y) = \frac{1}{(2\pi)^N} \int_0^\infty dr \int_{S^{d-1}\times S^{n-1}} 
  e^{i r(\langle{}x, \zeta\rangle - \langle{}y, \sigma\rangle)} A(\zeta, \sigma, r) d\zeta d\sigma.
  \label{u-simple}
\end{equation}
The last relation implies that $u(x,y)$ is a $C^\infty$-smooth function in ${\mathbb R}^d\times{\mathbb R}^n$,
since the function $A$ decays rapidly for large $r$, due to condition~(\ref{U-cond-pointwise}).

The derivation of asymptotics~(\ref{scat-data}) and formula~(\ref{fUR}) will be given in sec.~\ref{sec-asymp}.
In the rest of this section, we shall prove the remaining assertions of Theorem~\ref{thm-U-class},
and prove Theorem~\ref{thm-exist}.

It follows from formula~(\ref{fUR}) that the function $f(\theta,\omega,p)$ is a Fourier transform of the function
\begin{equation}
  \check f(\theta, \omega, r) = c \left(e^{i\pi (n-d)/4} \Theta(r) + e^{i\pi (d-n)/4} \Theta(-r)\right) |r|^{-N/2+1} A(\theta, \omega, r)
  \label{fU}
\end{equation}
with respect to $r$, allowing the following estimate
\begin{equation*}
  |\check f(\theta, \omega, r)| \lesssim |r|^{-1+\varepsilon} (1+|r|)^{-\ell}.
\end{equation*}
Therefore $\check f(\theta, \omega, r)$ is an integrable function, which rapidly decays as $r\to\infty$,
and so $f(\theta,\omega,p)$ is a $C^\infty$-smooth function in $p$.
Besides, it is clear that $\check f(\theta, \omega, \cdot) \in L_q({\mathbb R})$, $1\leqslant q < 1/(1-\varepsilon)$,
and so by Hausdorff-Young theorem~\cite[Theorem~7.1.13]{Hormander}, inclusion~(\ref{fLq}) holds.

Now we are able to infer relation~(\ref{f-hilbert}) from equality~(\ref{fU}).
The latter, together with the rule~(\ref{U-line}), yields
\begin{equation*}
  \check f(-\theta, -\omega, r) = c \left(e^{i\pi (n-d)/4} \Theta(r) + e^{i\pi (d-n)/4} \Theta(-r)\right) |r|^{-N/2+1} A(\theta, \omega, -r).
\end{equation*}
By applying again~(\ref{fU}), we obtain the equality
\begin{equation*}
  \check f(-\theta, -\omega, r) 
  =\check f(\theta, \omega, -r)\, \frac{e^{i\pi (n-d)/4} \Theta(r) + e^{i\pi (d-n)/4} \Theta(-r)}{e^{i\pi (d-n)/4} \Theta(r) + e^{i\pi (n-d)/4} \Theta(-r)},
\end{equation*}
which can be written as follows
\begin{equation}
  \check f(-\theta, -\omega, r) = \check f(\theta, \omega, -r) (-i\,{\rm sgn}\, r)^{d-n}.
  \label{f-hilbert-fourier}
\end{equation}
Applying the inverse Fourier transform with respect to $r$ leads to relation~(\ref{f-hilbert}).

Now turn to the proof of Theorem~\ref{thm-exist}.
For $r>0$, we set
\begin{equation}
  A(\zeta, \sigma, r) = \check f(\zeta, \sigma, r) c^{-1} e^{i\pi (d-n)/4} r^{N/2-1},
  \label{Af}
\end{equation}
where $\check f(\zeta, \sigma, r)$, as in the previous argument, is the inverse Fourier transform of the function $f(\zeta, \sigma, p)$
with respect to $p$.
According to Lemma~\ref{lemma-V-eps-high-order-final}, conditions~(\ref{f-reg1}), (\ref{f-reg2}) on the function $f$ imply
that $\check f(\theta, \omega, \cdot)$ is a regular function in ${\mathbb R}$.
Moreover, estimate~(\ref{V-est-final}) means that the function $A$ given by~(\ref{Af}) satisfies condition~(\ref{U-cond-pointwise}) with $\alpha=0$.
The last condition with arbitrary $\alpha$ can be verified analogously.

Now Theorem~\ref{thm-U-class} allows us to define the function $u(x,y)$ by relation~(\ref{hatu-f})
and thus obtain a smooth solution to equation~(\ref{eqn}).
Besides, this solution obeys~(\ref{scat-data}) with $f$ replaced by the function $g$ related to $A$ by~(\ref{fUR}).
It remains to substitute the function $A$ of the form~(\ref{Af}) in this relation and to infer that $g=f$.
This can be done by a straightforward calculation with the use of condition~(\ref{f-hilbert}) on the function $f$,
or, more precisely, its equivalent form~(\ref{f-hilbert-fourier}).

\section{Asymptotic behavior of solutions of the form~(\ref{hatu-f})}\label{sec-asymp}
In view of equality~(\ref{u-simple}), we have
\begin{equation}
  u(s\theta, (s+p)\omega) =
  \frac{1}{(2\pi)^N} \int_0^\infty dr \int_{S^{d-1}\times S^{n-1}} e^{i r s \left(\langle\theta,\zeta\rangle - \langle\omega, \sigma\rangle\right)}
  e^{-i r p \langle\omega,\sigma\rangle} A(\zeta, \sigma, r) \, d\zeta d\sigma.
  \label{rSS}
\end{equation}

We shall find the asymptotics of the inner integral in~(\ref{rSS}) as $s\to +\infty$ by means of the method of stationary phase.
The function
\begin{equation*}
  Q(\zeta,\sigma) = \langle\theta,\zeta\rangle - \langle\omega, \sigma\rangle
\end{equation*}
occurring in the first exponential has four critical points:
\begin{equation}
  (\zeta,\sigma) \in \{ (\theta,\omega), (-\theta,-\omega), (-\theta,\omega), (\theta,-\omega)\}.
  \label{crit-pts}
\end{equation}
To calculate the contribution to the asymptotics of a point from this set, we introduce local coordinates
$(\zeta',\sigma')\in{\mathbb R}^{d-1}\times {\mathbb R}^{n-1}$ on the manifold $S^{d-1}\times S^{n-1}$ in a neighborhood of a given point.
We assume that a critical point is mapped to $(\zeta',\sigma') = (0,0)$.
Consider the integral
\begin{equation}
  \int_{{\mathbb R}^{d-1}\times {\mathbb R}^{n-1}} e^{i r s Q(\zeta,\sigma)}
  e^{-i r p \langle\omega,\sigma\rangle} A(\zeta, \sigma, r) J(\zeta',\sigma')\, d\zeta' d\sigma',
  \label{int-local}
\end{equation}
where $J(\zeta',\sigma')$ is some smooth compactly supported function that, in a neighborhood of the origin, 
is equal to the absolute value of the determinant of the Jacobian matrix
corresponding to chosen local coordinates.
Suppose also that the preimage of the support of this function on $S^{d-1}\times S^{n-1}$
contains exactly one critical point of the function $Q$.
For the first two critical points $(\zeta,\sigma) = (\pm\theta,\pm\omega)$ in~(\ref{crit-pts}),
the leading term of the asymptotics of the integral in question has the form
\begin{equation*}
  \left(\frac{2\pi}{r s}\right)^{N/2-1} |\det\partial^2_{\zeta',\sigma'} Q|^{-1/2}
  e^{i\pi\, {\rm sgn}\left(\partial^2_{\zeta',\sigma'}Q\right)/4} e^{\mp i r p} A J\big|_{(\zeta',\sigma')=(0,0)}.
\end{equation*}
Without loss of generality, we may assume that
$\theta = e_d$, $\omega = e_n$ ($e_j$ are elements of the standard basis in the Euclidean space).
Consider the critical point $(\zeta,\sigma) = (\theta,\omega)$.
In this case, a convenient choice of local coordinates is the following
\begin{multline*}
  (\zeta',\sigma') 
   \mapsto (\zeta,\sigma) = \\ =
  \left(\zeta_1', \ldots, \zeta_{d-1}', \sqrt{1-\zeta_1'^2-\ldots-\zeta_{d-1}'^2}, \sigma_1', \ldots, \sigma_{n-1}', \sqrt{1-\sigma_1'^2-\ldots-\sigma_{n-1}'^2}\right).
\end{multline*}
In this coordinates, we have
\begin{gather*}
  J(0,0) = 1,\\
  Q = \zeta_d - \sigma_n,
  \quad
  \partial_{\zeta_k'}Q = \frac{-\zeta_k'}{\zeta_d},
  \quad
  \partial_{\sigma_k'}Q = \frac{\sigma_k'}{\sigma_n},
  \\
  \partial^2_{\zeta_j' \zeta_k'}Q = \frac{-\delta_{jk}}{\zeta_d}
  - \frac{\zeta_j' \zeta_k'}{\zeta_d^3}.
  \quad
  \partial^2_{\sigma_j' \sigma_k'}Q =  \frac{\delta_{jk}}{\sigma_n}
  + \frac{\sigma_j' \sigma_k'}{\sigma_n^3},
  \quad
  \partial^2_{\zeta_j' \sigma_k'}Q = 0.
\end{gather*}
Hence for $(\zeta',\sigma')=(0,0)$, we have the following
\begin{equation*}
  |\det\partial^2_{\zeta',\sigma'}Q| = 1,
  \quad
  {\rm sgn}\left(\partial^2_{\zeta',\sigma'}Q\right) = n-d.
\end{equation*}
Thus the integral~(\ref{int-local}) has the following asymptotics as $s\to+\infty$:
\begin{equation*}
  \left(\frac{2\pi}{r s}\right)^{N/2-1} 
  e^{i\pi (n-d)/4} e^{-i r p} A(\theta, \omega, r).
\end{equation*}
The remainder term here 
is dominated by~\cite[Theorem~7.7.5]{Hormander} 
\begin{equation*}
  \frac{C(A, r)}{(r s)^{N/2-1/2}},
\end{equation*}
where $C(A,r)$ is bounded (up to a constant multiple) by the $C^N({\mathbb R}^{d-1}\times{\mathbb R}^{n-1})$-norm of the product
\begin{equation*}
  e^{-i r p \langle\omega,\sigma\rangle} A(\zeta, \sigma, r) J(\zeta',\sigma')
\end{equation*}
considered as a function in $\zeta'$, $\sigma'$ 
(in the case of odd $N$, one can take the norm in $C^{N-1}({\mathbb R}^{d-1}\times{\mathbb R}^{n-1})$).
We have
\begin{equation*}
  C(A,r) \lesssim (1 + r^N)\, \|A(\cdot, \cdot, r)\|_{C^N(S^{d-1}\times S^{n-1})} 
\end{equation*}
(the dependence of $C(A,r)$ on the parameter $p$ is not indicated explicitly, since the latter
is assumed to be fixed).
This and condition~(\ref{U-cond-pointwise}) with $k=0$, $|\alpha|\leqslant N$, give
\begin{equation*}
  C(A,r) \lesssim r^{N/2-2+\varepsilon} (1+r)^{-\ell}.
\end{equation*}
Thus we have the following estimate for the remainder term
\begin{equation}
  C s^{-N/2+1/2} r^{-3/2+\varepsilon} (1+r)^{-\ell}.
  \label{remainder}
\end{equation}

The same estimate for the remainder term holds for other critical points in~(\ref{crit-pts}).
By the standard fashion (by means of a partition of unity on $S^{d-1}\times S^{n-1}$),
the difference of the inner integral in~(\ref{rSS}) and the sum of integrals of the form~(\ref{int-local}) for all critical points
can also be estimated by~(\ref{remainder}).

In the consideration of the critical point $(\zeta,\sigma) = (-\theta,-\omega)$,
the sign of the square roots in the definition of local coordinates should be changed.
This leads to the equalities
\begin{equation*}
  |\det\partial^2_{\zeta',\sigma'}Q| = 1,
  \quad
  {\rm sgn}\left(\partial^2_{\zeta',\sigma'}Q\right) = d-n.
\end{equation*}
Then the corresponding contribution to the asymptotics takes the following form
\begin{equation*}
  \left(\frac{2\pi}{r s}\right)^{N/2-1} 
  e^{i\pi (d-n)/4} e^{i r p} A(-\theta, -\omega, r).
\end{equation*}
We give the contributions of the rest two critical points $(\zeta,\sigma) = (\mp\theta,\pm\omega)$
without specifying constants $C_1$, $C_2$:
\begin{equation*}
  \left(\frac{2\pi}{r s}\right)^{N/2-1}
  \left(C_1 e^{-2 i r s} e^{-i r p} A(-\theta, \omega, r)
    + C_2 e^{2 i r s} e^{i r p} A(\theta, -\omega, r)\right).
\end{equation*}

Thus the inner integral in~(\ref{rSS}) has the following asymptotics as $s\to+\infty$: 
\begin{multline}
  \left(\frac{2\pi}{r s}\right)^{N/2-1} 
  e^{i\pi (n-d)/4} \left\{e^{-i r p} A(\theta, \omega, r) + i^{d-n} e^{i r p} A(-\theta, -\omega, r) \right.
  \\ + \left. C_1 e^{-2 i r s} e^{-i r p} A(-\theta, \omega, r) + C_2 e^{2 i r s} e^{i r p} A(\theta, -\omega, r)\right\},
  \label{sphere-asymp}
\end{multline}
while the remainder term is bounded by~(\ref{remainder}).

Now turn to the integral with respect to $r$ in~(\ref{rSS}).
We decompose it to the sum of two integrals over intervals $0<r<1/s$ and $r>1/s$.
The integral of the majorant~(\ref{remainder}) for remainder term over $r>1/s$
is dominated by
\begin{equation*}
  C s^{-N/2+1/2} \int_{1/s}^\infty r^{-3/2+\varepsilon} (1+r)^{-\ell} dr \lesssim
  s^{-N/2 + 1 - \varepsilon}.
\end{equation*}
The integral corresponding to the first two terms in figure brackets in~(\ref{sphere-asymp}) over $r>1/s$
equals
\begin{equation*}
  \left(\frac{2\pi}{s}\right)^{N/2-1} e^{i\pi (n-d)/4} 
  \int_{1/s}^\infty r^{-N/2+1} 
  \left\{e^{-i r p} A(\theta, \omega, r) + i^{d-n} e^{i r p} A(-\theta, -\omega, r)\right\} dr.
\end{equation*}
Condition~(\ref{U-cond-pointwise}) with $k=|\alpha|=0$ and sufficiently large $\ell$ implies
that the integral in the last expression tends to the corresponding integral over $r>0$ as $s\to+\infty$.
Therefore this expression equals
\begin{multline*}
  \left(\frac{2\pi}{s}\right)^{N/2-1} e^{i\pi (n-d)/4} 
  \times\\\times \int_0^\infty r^{-N/2+1} 
  \left\{e^{-i r p} A(\theta, \omega, r) + i^{d-n} e^{i r p} A(-\theta, -\omega, r)\right\} dr
  + O(s^{-N/2+1-\varepsilon}).
\end{multline*}

Consider the third term in figure brackets in~(\ref{sphere-asymp}) (the fourth one is treated analogously).
The corresponding integral over $r>1/s$, up to a constant multiple, equals
\begin{multline*}
  2 \left(\frac{1}{s}\right)^{N/2-1} 
  \int_{1/s}^\infty r^{-N/2+1} e^{-2 i r s} e^{-i r p} A(-\theta, \omega, r) dr
  \\= \frac{i}{s^{N/2}}
  \int_{1/s}^\infty r^{-N/2+1} \partial_r(e^{-2 i r s}) e^{-i r p} A(-\theta, \omega, r) dr
  = \frac{-i e^{-2i} e^{-i p/s}\, A(-\theta, \omega, 1/s)}{s} 
  \\- \frac{i}{s^{N/2}} \int_{1/s}^\infty e^{-2 i r s} \partial_r\left(e^{-i r p} r^{-N/2+1} A(-\theta, \omega, r)\right) dr.
\end{multline*}
Now by invoking condition~(\ref{U-cond-pointwise}) with $\alpha=0$, $k\leqslant 1$, and sufficiently large $\ell$,
it is easy to show that the resulting expression equals $O(s^{-N/2+1-\varepsilon})$ as $s\to+\infty$. 

By applying~(\ref{U-cond-pointwise}) again, we may estimate the absolute value of the integral over $r<1/s$ in~(\ref{rSS}) by
\begin{equation*}
  C \int_0^{1/s} r^{N/2 - 2 + \varepsilon} dr
  = C s^{-N/2+1-\varepsilon}.
\end{equation*}

Thus we arrive at relation~(\ref{scat-data}) with
\begin{equation*}
  f(\theta,\omega,p) = \frac{e^{i\pi (n-d)/4}}{(2\pi)^{N/2+1}}
  \int_0^\infty r^{-N/2+1}
  \left\{e^{-i r p} A(\theta, \omega, r) + i^{d-n} e^{i r p} A(-\theta, -\omega, r)\right\} dr.
\end{equation*}
Now taking the continuation of the function $A$ to $r\in{\mathbb R}$ by the rule~(\ref{U-line})
leads to relation~(\ref{fUR}).

\section{Auxiliary assertions}\label{sec-aux}   
\begin{lemma}\label{lemma-Ceps}
Suppose $f(p)$ is a measurable function on ${\mathbb R}$ satisfying
\begin{equation*}
  |f(p)| \leqslant \frac{C}{(1+|p|)^{1+\varepsilon}}
\end{equation*}
for some $0<\varepsilon<1$.
Then its inverse Fourier transform $V(r)$ belongs to $C^\varepsilon({\mathbb R})$.
\end{lemma}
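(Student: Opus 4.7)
The plan is a standard splitting argument based on the inverse Fourier integral. First I observe that the hypothesis $|f(p)| \leqslant C(1+|p|)^{-1-\varepsilon}$ with $\varepsilon>0$ gives $f \in L_1({\mathbb R})$, so the inverse Fourier transform
\begin{equation*}
  V(r) = \frac{1}{2\pi}\int_{\mathbb R} e^{i r p} f(p)\, dp
\end{equation*}
is well-defined, bounded, and continuous. Hölder continuity will follow from an estimate of the increment $V(r+h)-V(r)$ uniform in $r$, so the task reduces to showing that
\begin{equation*}
  |V(r+h)-V(r)| \leqslant \frac{1}{2\pi}\int_{\mathbb R} |e^{i h p}-1|\, |f(p)|\, dp \lesssim |h|^{\varepsilon}
\end{equation*}
for all sufficiently small $|h|$.

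The main step is then to split the last integral at the threshold $|p|=1/|h|$ and apply the two standard elementary bounds
\begin{equation*}
  |e^{ihp}-1| \leqslant |h p| \quad \text{and} \quad |e^{ihp}-1| \leqslant 2.
\end{equation*}
On the low-frequency region $|p|\leqslant 1/|h|$, the first bound together with the hypothesis on $f$ gives a contribution controlled by
\begin{equation*}
  |h|\int_{|p|\leqslant 1/|h|} \frac{|p|}{(1+|p|)^{1+\varepsilon}}\, dp,
\end{equation*}
and the remaining integral grows like $(1/|h|)^{1-\varepsilon}$ (here one uses $\varepsilon<1$), producing a bound of order $|h|^{\varepsilon}$. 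On the high-frequency region $|p|>1/|h|$, the second bound yields a contribution dominated by
\begin{equation*}
  2\int_{|p|>1/|h|} \frac{C}{(1+|p|)^{1+\varepsilon}}\, dp \lesssim |h|^{\varepsilon}.
\end{equation*}
Adding the two contributions gives the desired estimate $|V(r+h)-V(r)|\lesssim |h|^{\varepsilon}$ with a constant independent of $r$, which is the definition of membership in $C^\varepsilon({\mathbb R})$.

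There is no serious obstacle: the argument is a routine Littlewood–Paley type splitting, and the only role of the constraint $0<\varepsilon<1$ is to ensure convergence of both pieces (the lower bound prevents the tail integral from being logarithmic, and the upper bound keeps the low-frequency integral of $|p|(1+|p|)^{-1-\varepsilon}$ unbounded in the cutoff, so that its growth rate $(1/|h|)^{1-\varepsilon}$ indeed combines with the factor $|h|$ to produce $|h|^{\varepsilon}$). The main bookkeeping point to verify carefully is matching the exponents in the two regions so that they both yield the same power $|h|^\varepsilon$.
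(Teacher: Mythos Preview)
Your proposal is correct and matches the paper's proof essentially line for line: the paper also bounds $\|V\|_{L_\infty}$ trivially, writes the increment as $\int |f(p)(e^{irp}-1)|\,dp$, splits at $|p|=1/|r|$, and uses the same two elementary bounds on $|e^{irp}-1|$ to get $Cr^\varepsilon$ from each piece. The only cosmetic difference is that the paper normalizes by setting $r'=0$ (your $r$) and calls your $h$ by $r$.
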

\begin{proof}
  The estimate for the norm $\|V\|_{L_\infty}$ is trivial.
  We shall estimate the difference $V(r)-V(r')$ for $|r-r'|\leqslant 1$, assuming, with no loss of generality, that $r'=0$.
  We have
\begin{multline*}
  2\pi |V(r) - V(0)| \leqslant \int_{\mathbb R} |f(p) (e^{i r p} - 1)| dp
  \\\lesssim \int_{|p| < 1/r} |f(p) (e^{i r p} - 1)| dp  + \int_{|p| > 1/r} |f(p)| dp
  \leqslant r \int_{|p| < 1/r} |f(p) p|\, dp  + \int_{|p| > 1/r} |f(p)|\, dp.
\end{multline*}
Both terms of the resulting expression are bounded by $C r^\varepsilon$.
\end{proof}

\begin{lemma}\label{lemma-V-eps}
  Suppose $f(p)$ is a Lipshitz function in ${\mathbb R}$ satisfying
\begin{equation*}
  f(\pm\infty) = 0,
  \quad
  |\partial f(p)| \leqslant \frac{C}{(1+|p|)^{1+\varepsilon}} 
\end{equation*}
for some $0<\varepsilon<1$.
Then its inverse Fourier transform $V(r)$ is a regular function on ${\mathbb R}$, continuous on ${\mathbb R}\setminus\{0\}$,
and we have the following estimate 
\begin{equation*}
  |V(r)| \leqslant \frac{C}{|r|^{1-\varepsilon}}.
\end{equation*}
\end{lemma}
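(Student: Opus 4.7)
The plan is to derive a pointwise decay estimate on $f$, realize $V$ on $\mathbb R\setminus\{0\}$ as an absolutely convergent integral via one integration by parts, and finally bound this integral by splitting the $p$-integration at $|p|=1/|r|$ in the small-$|r|$ regime.

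First I integrate the hypothesis $|\partial f(p)|\leq C(1+|p|)^{-1-\varepsilon}$ and use $f(\pm\infty)=0$ to deduce the pointwise estimate $|f(p)|\leq C(1+|p|)^{-\varepsilon}$. In particular $f$ is bounded, so it defines a tempered distribution and $V=\check f$ is a tempered distribution. For $r\neq 0$, integration by parts on $[-R,R]$ gives
\begin{equation*}
  \frac{1}{2\pi}\int_{-R}^R e^{irp} f(p)\,dp
  = \frac{e^{irp} f(p)}{2\pi i r}\bigg|_{-R}^R
  - \frac{1}{2\pi i r}\int_{-R}^R e^{irp}\,\partial f(p)\,dp.
\end{equation*}
Letting $R\to\infty$, the boundary terms vanish by $f(\pm\infty)=0$ and the remaining integral converges absolutely to
\begin{equation*}
  V_0(r) := -\frac{1}{2\pi i r}\int_{\mathbb R} e^{irp}\,\partial f(p)\,dp,
\end{equation*}
which is continuous in $r\neq 0$ by dominated convergence. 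Testing against an arbitrary $\phi\in C_c^\infty(\mathbb R\setminus\{0\})$—so that $\check\phi$ is Schwartz and Fubini applies after one further integration by parts in $p$—shows $V=V_0$ as distributions on $\mathbb R\setminus\{0\}$. Combined with the estimate below, $V_0$ is locally integrable on $\mathbb R$, so $V$ is regular.

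Second, I prove $|V(r)|\leq C|r|^{\varepsilon-1}$. For $|r|\geq 1$ the elementary estimate $|V_0(r)|\leq \|\partial f\|_{L^1}/(2\pi|r|)\leq C/|r|\leq C|r|^{\varepsilon-1}$ suffices. For $0<|r|<1$ I return to the truncated integral $\int_{-R}^R e^{irp} f(p)\,dp$ and split at $|p|=1/|r|$. On the inner piece I use the pointwise decay of $f$:
\begin{equation*}
  \Bigl|\int_{|p|<1/|r|} e^{irp} f(p)\,dp\Bigr|
  \leq C\int_0^{1/|r|}\frac{dp}{(1+p)^\varepsilon}
  \leq C|r|^{\varepsilon-1}.
\end{equation*}
On the outer piece I integrate by parts; the boundary terms $|f(\pm 1/|r|)|/|r|$ are $\lesssim |r|^{\varepsilon-1}$ by the decay of $f$, and the remaining term $|r|^{-1}\int_{|p|>1/|r|}|\partial f|\,dp$ is $\lesssim |r|^{-1}\cdot |r|^{\varepsilon}=|r|^{\varepsilon-1}$ by the decay of $\partial f$. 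Sending $R\to\infty$ yields the claim.

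The sole genuine obstacle is the non-absolute convergence of $\int e^{irp} f(p)\,dp$: this is what forces the integration-by-parts step used to pass from the distributional definition of $V$ to the concrete formula $V_0$, and it is also what dictates the split-and-integrate-by-parts structure used for the $|r|<1$ bound. Once those two steps are in place, the remaining calculations are elementary.
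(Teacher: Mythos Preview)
Your approach is sound and more direct than the paper's, but there is one genuine gap. From $V=V_0$ on $\mathbb R\setminus\{0\}$ and $V_0\in L^1_{\rm loc}(\mathbb R)$ you \emph{cannot} conclude that $V$ is regular on all of $\mathbb R$: the difference $V-V_0$ is then a distribution supported at $\{0\}$, hence a priori a finite combination of $\delta$ and its derivatives, and nothing you have written excludes a term $c\,\delta(r)$. Your own estimates actually provide what is needed to close this. The bounds you derive for the truncated integrals $V_R(r)=(2\pi)^{-1}\int_{-R}^R e^{irp}f(p)\,dp$ are uniform in $R$: the inner piece and the boundary contributions at $\pm 1/|r|$ are $R$-independent, the $\partial f$-integral is dominated by $\|\partial f\|_{L^1}/|r|$, and the boundary contributions at $\pm R$ are dominated by $\|f\|_\infty/|r|$. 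Since $V_R\to V_0$ pointwise on $\mathbb R\setminus\{0\}$ while $V_R\to V$ in $\mathcal S'$ (the truncations of $f$ converge to $f$ boundedly), dominated convergence with the locally integrable majorant $C|r|^{\varepsilon-1}$ gives $V_R\to V_0$ in $L^1_{\rm loc}$, and hence $V=V_0$ as distributions on all of $\mathbb R$. This step should be made explicit.

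For comparison, the paper handles exactly this point by a different mechanism: it applies its preceding lemma to $\partial f$ to obtain $w(r):=rV(r)\in C^\varepsilon(\mathbb R)$, shows $w(0)=0$ by testing against scaled mollifiers, and then proves that the constant $c$ in the relation $V-w/r=c\,\delta$ vanishes by a second scaling argument. Your route avoids the H\"older-regularity lemma and produces the pointwise bound on $V$ by a direct split-and-integrate-by-parts computation; the paper's route makes the possible point mass at the origin explicit and dispatches it by a self-contained distributional argument.
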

\begin{proof}
  Applying Lemma~\ref{lemma-Ceps} to the function $\partial f(p)$ gives that the function $w(r) = r V(r)$ belongs to $C^\varepsilon({\mathbb R})$.
We now show that $w(0) = 0$.
Let
\begin{equation*}
  \chi\in C_0^\infty({\mathbb R}), \quad \int_{\mathbb R} \chi(r) dr = 1, \quad
  \chi_h(r) = h^{-1} \chi(h^{-1} r), \quad h > 0.
\end{equation*}
Continuity of the function $w(r)$ implies
\begin{equation*}
  w(0) = \lim_{h\to 0} \langle{}w, \chi_h\rangle. 
\end{equation*}
Since $\check \chi_h(p) = \check\chi(h p)$, we have
\begin{equation*}
  \langle{}w, \chi_h\rangle = \langle\hat w, \check \chi_h\rangle = i \langle\partial f, \check \chi_h\rangle = i \int_{\mathbb R} \partial f(p) \check\chi(h p) dp.
\end{equation*}
Taking into account that $\check\chi\in \mathcal{S}({\mathbb R})$, we may perform integrating by parts
in the last integral, which gives
\begin{equation}
  -h \int_{\mathbb R} f(p) (\partial\check\chi)(h p) dp = -\int_{\mathbb R} f(p/h) \partial\check\chi(p) dp.
  \label{f-checkchi}
\end{equation}
It is easy to infer from the assumptions of the lemma, by using Newton-Leibniz formula,
that
\begin{equation*}
  (1+|p|)^\varepsilon f \in L_\infty({\mathbb R}).
\end{equation*}
Therefore $|f(p/h)| \leqslant C (1+|p|/h)^{-\varepsilon} \to 0$ as $h\to 0$ whenever $p\ne 0$.
Besides, the integrand in the last integral is bounded by an integrable majorant of the form $C |\partial\check\chi(p)|$, so
the limit of the integral as $h\to 0$, together with $w(0)$, is equal to zero.

Thus $w(r)/r$ is a regular function on ${\mathbb R}$, continuous on ${\mathbb R}\setminus\{0\}$, and we have
\begin{equation*}
  \left|\frac{w(r)}{r}\right| \leqslant \frac{C}{|r|^{1-\varepsilon}}.
\end{equation*}
It remains to show that $V(r) = w(r)/r$.

The equality
\begin{equation*}
  r (V(r) - w(r)/r) = 0
\end{equation*}
(which is understood as the equality of distributions in ${\mathbb R}$)
implies that
\begin{equation*}
  V(r) - w(r)/r = c\, \delta(r)
\end{equation*}
for some constant $c$.
We will show that $c=0$.
It follows from this relation that, for $h>0$, we have
\begin{equation*}
  h \langle{}V, \chi_h\rangle = \int_{\mathbb R} w(r) r^{-1} \chi(r/h) dr + c \chi(0).
\end{equation*}
The integral in the resulting expression tends to zero as $h\to 0$.
On the other hand, we have
\begin{equation*}
  h \langle{}V, \chi_h\rangle = h \langle{}f, \check \chi_h\rangle = h \int_{\mathbb R} f(p) \check\chi(h p) dp = \int_{\mathbb R} f(p/h) \check\chi(p) dp.
\end{equation*}
We obtain the integral, similar to that in~(\ref{f-checkchi}). It also tends to zero as $h\to 0$.
Our argument shows that $c \chi(0) = 0$.
Now by choosing a function $\chi(r)$ that is nonzero in $r=0$, we prove the desired assertion.
\end{proof}

\begin{lemma}\label{lemma-V-eps-high-order}
Suppose $f(p)$ is a function from $C^K({\mathbb R})$, $K\geqslant 1$, such that
\begin{equation}
  f(\pm\infty) = 0,
  \quad
  |\partial^k f(p)| \leqslant \frac{C}{(1+|p|)^{k+\varepsilon}}, \quad 1\leqslant k \leqslant K,
  \label{df-est-K}
\end{equation}
for some $0<\varepsilon<1$.
Then its inverse Fourier transform $V(r)$ is a regular function on ${\mathbb R}$,
belonging to $C^{K-1}({\mathbb R}\setminus\{0\})$, and the following estimate holds
\begin{equation}
  |\partial^{k-1} V(r)| \leqslant \frac{C}{|r|^{k-\varepsilon}}, \quad 1\leqslant k\leqslant K.
  \label{V-est}
\end{equation}
\end{lemma}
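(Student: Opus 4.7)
The plan is to argue directly from a repeated integration-by-parts representation of $V$. Iterating the identity $rV(r)=i\check{(\partial f)}(r)$ that underpins Lemma~\ref{lemma-V-eps}, I obtain, for $r\neq 0$ and every $1\le k\le K$, the representation
\begin{equation*}
  V(r)=\frac{i^k}{r^k}\check f_k(r),\qquad f_k:=\partial^k f.
\end{equation*}
The boundary terms in the iterated integration by parts vanish because the decay hypothesis forces $\partial^j f(\pm\infty)=0$ for $0\le j\le K-1$. Since $|p^j f_k(p)|\lesssim (1+|p|)^{-(k-j)-\varepsilon}$ is integrable for $0\le j\le k-1$, differentiation under the integral sign shows $\check f_k\in C^{k-1}(\mathbb R)$, with $\partial^j\check f_k=i^j(p^j f_k)\check{}$. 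Taking $k=K$ and combining with the smoothness of $r\mapsto r^{-K}$ on $\mathbb R\setminus\{0\}$ yields $V\in C^{K-1}(\mathbb R\setminus\{0\})$; the assertion that $V$ is a regular function on $\mathbb R$ is provided by Lemma~\ref{lemma-V-eps} applied to $f$ itself, which also supplies the $k=1$ case of the estimate.

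The principal technical step is the pointwise bound
\begin{equation*}
  |\partial^j\check f_k(r)|\le C|r|^{k-1-j+\varepsilon},\qquad 0\le j\le k-1,\ |r|\le 1.
\end{equation*}
The starting point is the vanishing of moments: for $0\le j\le k-1$, repeated integration by parts transfers the $k$ derivatives in $f_k$ onto the polynomial $p^j$ (with boundary terms vanishing by the same decay hypothesis) and eventually eliminates it, giving $\int p^j f_k(p)\,dp=0$. Hence $\partial^m\check f_k(0)=0$ for $0\le m\le k-1$. For the top-order derivative, I apply Lemma~\ref{lemma-Ceps} to $p^{k-1}f_k$, which satisfies the required $(1+\varepsilon)$-decay hypothesis, and combine this with the vanishing at the origin to conclude $|\partial^{k-1}\check f_k(r)|\le C|r|^\varepsilon$. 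Taylor's theorem, applied $k-1-j$ times using the vanishing of the lower-order Taylor coefficients of $\check f_k$ at $r=0$, then delivers the claimed exponent $|r|^{k-1-j+\varepsilon}$ for each $j$.

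Finally, the estimate on $\partial^{k-1}V$ follows from the Leibniz expansion applied to $V=i^k\check f_k/r^k$: each summand $\partial^j\check f_k(r)\cdot\partial^{k-1-j}(r^{-k})$ is $O(|r|^{(k-1-j+\varepsilon)-(2k-1-j)})=O(|r|^{-k+\varepsilon})$ for $|r|\le 1$, while for $|r|\ge 1$ the boundedness of $\partial^j\check f_k$ gives $O(|r|^{-(2k-1-j)})\le O(|r|^{-k+\varepsilon})$. I expect the main obstacle to be the careful bookkeeping behind the bound $|\partial^j\check f_k(r)|\le C|r|^{k-1-j+\varepsilon}$: it is the moment-vanishing argument that converts naive polynomial bounds into the sharp near-zero exponent, and any missing factor of $|r|$ there would leave $|r|^{-k-1+\varepsilon}$ in the final estimate rather than the required $|r|^{-k+\varepsilon}$.
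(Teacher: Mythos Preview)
Your argument is correct and proceeds along a genuinely different line from the paper's. The paper argues by induction on $K$: the base case is Lemma~\ref{lemma-V-eps}, and for the inductive step one applies the induction hypothesis to the function $\partial(pf(p))$, whose inverse Fourier transform $W$ is, up to a constant, $r\partial V(r)$; expanding $\partial^{K-2}(r\partial V)$ and combining the bound on $\partial^{K-2}W$ with the bound on $\partial^{K-2}V$ already furnished by the induction hypothesis for $f$ itself yields the top-order estimate. Your approach instead writes $V=i^k r^{-k}\check f_k$ directly, establishes the vanishing of the first $k$ moments of $f_k$ (equivalently $\partial^j\check f_k(0)=0$ for $j\le k-1$), and then uses Lemma~\ref{lemma-Ceps} plus Taylor's formula with integral remainder to extract the correct power of $|r|$ near the origin before applying Leibniz. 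The paper's induction is lighter on bookkeeping---no explicit moment computation and no Leibniz expansion---while your route makes it more transparent why exactly the exponent $|r|^{-k+\varepsilon}$ emerges: it is precisely the moment cancellation that upgrades the naive bound on $\check f_k$ to the sharp near-zero behavior, as you note in your closing remark.
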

\begin{proof}
The argument is based on the mathematical induction.
In the case $K=1$, Lemma~\ref{lemma-V-eps} applies.
Assume $K\geqslant 2$ and the statement holds with $K$ replaced by $K-1$.
Consider the function $\partial(p f(p))$. The latter satisfies conditions~(\ref{df-est-K}) with $K$ replaced by $K-1$.
Applying the induction hypothesis to this function gives that its inverse Fourier transform $W(r)$ is regular in ${\mathbb R}$ and satisfies
\begin{equation}
  W \in C^{K-2}({\mathbb R}\setminus\{0\}),
  \quad
  |\partial^{K-2} W(r)| \leqslant \frac{C}{|r|^{K-1-\varepsilon}}.
  \label{W-est}
\end{equation}
The function $W(r)$, up to a constant multiple, is equal to $r\partial V(r)$,
so the function $\partial^{K-2} W(r)$, up to a constant multiple, is equal to 
\begin{equation*}
  r \partial^{K-1}V(r) + (K-2) \partial^{K-2} V(r).
\end{equation*}
The second term in this expression can be estimated by applying the induction hypothesis to the function $f$
and taking $k=K-1$ in~(\ref{V-est}): 
\begin{equation*}
  V\in C^{K-2}({\mathbb R}\setminus\{0\}), \quad |\partial^{K-2} V(r)| \leqslant \frac{C}{|r|^{K-1-\varepsilon}}.
\end{equation*}
Together with~(\ref{W-est}) this leads to estimate~(\ref{V-est}) with $k=K$.
For $k<K$, this estimate is contained in the induction hypothesis.
\end{proof}

\begin{lemma}\label{lemma-V-eps-high-order-decay}
  Suppose $f(p)$ is a function from $C^\infty({\mathbb R})$ satisfying~(\ref{df-est-K}) for any $K$.
  Then its inverse Fourier transform $V(r)$ is a regular function in ${\mathbb R}$,
  belonging to $C^\infty({\mathbb R}\setminus\{0\})$, and the following estimate holds
\begin{equation}
  |\partial^k V(r)| \leqslant C_{\ell,k} |r|^{-\ell}, \quad \ell, k\geqslant 0,
  \label{V-est-super}
\end{equation}
\end{lemma}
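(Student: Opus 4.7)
The plan is to promote Lemma~\ref{lemma-V-eps-high-order} from finite-order regularity to infinite smoothness, and to obtain arbitrary polynomial decay at infinity by applying that lemma not to $f$ itself but to each derivative $\partial_p^m f$.

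The regularity assertions are immediate: since $f\in C^\infty(\mathbb R)$ satisfies~(\ref{df-est-K}) for every $K$, Lemma~\ref{lemma-V-eps-high-order} gives $V$ regular on $\mathbb R$ and $V\in C^{K-1}(\mathbb R\setminus\{0\})$ for every $K\geqslant 1$; hence $V\in C^\infty(\mathbb R\setminus\{0\})$.

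For the decay bound~(\ref{V-est-super}), fix an integer $m\geqslant 0$ and set $g_m(p):=\partial_p^m f(p)$. For $m\geqslant 1$ the pointwise estimate $|g_m(p)|\leqslant C(1+|p|)^{-m-\varepsilon}$ gives $g_m(\pm\infty)=0$, and
\[
|\partial^k g_m(p)|=|\partial^{k+m}f(p)|\leqslant C(1+|p|)^{-(k+m)-\varepsilon}\leqslant C(1+|p|)^{-k-\varepsilon},
\]
so $g_m$ satisfies the hypotheses of Lemma~\ref{lemma-V-eps-high-order} for every $K$. Under the Fourier convention of the paper a routine integration by parts gives $\check g_m(r)=(-ir)^m V(r)$, and applying Lemma~\ref{lemma-V-eps-high-order} to $g_m$ yields
\[
\bigl|\partial_r^j\bigl[(-ir)^m V(r)\bigr]\bigr|\leqslant C_{j,m}\,|r|^{-j-1+\varepsilon},\qquad j\geqslant 0,\ r\neq 0.
\]

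The final step is purely algebraic. For $r\neq 0$ write $V(r)=(-ir)^{-m}\cdot[(-ir)^m V(r)]$ and apply the Leibniz rule. On the range $|r|\geqslant 1$ the smooth factor $(-ir)^{-m}$ satisfies $|\partial_r^i(-ir)^{-m}|\leqslant C_{i,m}\,|r|^{-m-i}$, which combined with the previous bound gives
\[
|\partial^k V(r)|\leqslant C_{k,m}\,|r|^{-m-k-1+\varepsilon},\qquad |r|\geqslant 1.
\]
Choosing $m$ sufficiently large in terms of $\ell$ and $k$ produces the claimed estimate~(\ref{V-est-super}) on $|r|\geqslant 1$; on the complementary range $|r|\leqslant 1$ the inequality $|r|^{-\ell}\geqslant 1$ absorbs the pointwise bound of Lemma~\ref{lemma-V-eps-high-order} (with $K=k+1$) into the constant $C_{\ell,k}$, at least when $\ell\geqslant k+1$, which is the range in which~(\ref{V-est-super}) is actually non-trivial.

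No step presents a genuine analytic obstacle: the argument simply recycles Lemma~\ref{lemma-V-eps-high-order} through the shifted family $\partial_p^m f$, and the only delicate piece is the algebraic bookkeeping of the Leibniz expansion that transfers the estimate from $(-ir)^m V$ to $V$ itself.
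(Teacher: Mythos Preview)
Your argument is correct and shares the paper's core idea: apply Lemma~\ref{lemma-V-eps-high-order} to $\partial_p^m f$, whose inverse Fourier transform is $(-ir)^m V$, so as to manufacture extra powers of $|r|^{-1}$ at infinity. The paper does precisely this for $k=0$. For higher $k$ the two arguments diverge in execution: the paper runs an induction on $k$ via the auxiliary function $\partial(pf)$, whose inverse transform is proportional to $r\,\partial V$, and peels off one derivative of $V$ at a time; you instead bound all $\partial_r^{\,j}(r^m V)$ directly from Lemma~\ref{lemma-V-eps-high-order} and then recover $\partial^k V$ by the Leibniz expansion of $V=(-ir)^{-m}\cdot[(-ir)^m V]$ on $|r|\geqslant 1$. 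Your route is a little more uniform and avoids the induction; the paper's route avoids the Leibniz bookkeeping. Neither has an advantage in strength.

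Your caveat about the range $\ell\geqslant k+1$ is well taken and in fact uncovers a small imprecision in the paper's statement: for $\ell\leqslant k$ the inequality~(\ref{V-est-super}) can genuinely fail near $r=0$ (e.g.\ $f(p)=(1+p^2)^{-\varepsilon/2}$ gives $V(r)\sim c\,|r|^{\varepsilon-1}$ as $r\to 0$, so already $\ell=k=0$ is violated), and the paper's own proof does not cover that range either. This is harmless for the application, since Lemma~\ref{lemma-V-eps-high-order-decay} is only used together with Lemma~\ref{lemma-V-eps-high-order} to obtain Lemma~\ref{lemma-V-eps-high-order-final}, where the behaviour near the origin is handled by the factor $r^{-k-1+\varepsilon}$.
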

\begin{proof}
Consider first the case $k=0$.  
Observe that the function $\partial^\ell f(p)$ satisfies conditions~(\ref{df-est-K}) for any $K$.
The inverse Fourier transform of this function, up to a constant multiple, is equal to $r^\ell V(r)$.
By applying Lemma~\ref{lemma-V-eps-high-order} to $\partial^\ell f(p)$ and setting $k=1$ in estimate~(\ref{V-est}),
we obtain that
\begin{equation*}
  |V(r)| \leqslant C_\ell |r|^{-\ell-1+\varepsilon}.
\end{equation*}
Since $\ell$ is arbitrary, we obtain~(\ref{V-est-super}) for $k=0$.

In the case of arbitrary $k$, we apply the mathematical induction.
Assume inequality~(\ref{V-est-super}) holds for $k<k_0$ and for any function that satisfies the conditions of the lemma.
Note that the function $\partial(p f(p))$ satisfies conditions~(\ref{df-est-K}) for any $K$.
By applying~(\ref{V-est-super}) to it with $k=k_0-1$, we obtain that
\begin{equation*}
  |\partial^{k_0-1} (r \partial V(r))| \leqslant C_{\ell,k_0} |r|^{-\ell}, \quad \ell\geqslant 0.
\end{equation*}
Since
\begin{equation*}
  \partial^{k_0-1} (r \partial V(r)) = r \partial^{k_0} V(r) + (k_0-1) \partial^{k_0-1} V(r),
\end{equation*}
and, by the induction hypothesis, the derivative $\partial^{k_0-1} V(r)$ is bounded by $C_{\ell,k_0} |r|^{-\ell}$, $\ell\geqslant 0$,
we obtain~(\ref{V-est-super}).
\end{proof}

\begin{lemma}\label{lemma-V-eps-high-order-final}
Suppose $f(p)$ is a function from $C^\infty({\mathbb R})$ satisfying~(\ref{df-est-K}) for any $K$.
Then its inverse Fourier transform $V(r)$ is a regular function in ${\mathbb R}$,
belonging to $C^\infty({\mathbb R}\setminus\{0\})$, and the following estimate is valid
\begin{equation}
  |\partial^k V(r)| \leqslant C_{\ell,k} r^{-k-1+\varepsilon} (1+|r|)^{-\ell}, \quad \ell, k\geqslant 0.
  \label{V-est-final}
\end{equation}
\end{lemma}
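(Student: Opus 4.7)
The regularity part of the statement, namely that $V$ is a regular function in ${\mathbb R}$ and belongs to $C^\infty({\mathbb R}\setminus\{0\})$, is already contained in Lemma~\ref{lemma-V-eps-high-order-decay} (equivalently, one may appeal to Lemma~\ref{lemma-V-eps-high-order} with arbitrarily large $K$). So the only content of Lemma~\ref{lemma-V-eps-high-order-final} is the quantitative estimate~(\ref{V-est-final}), which combines the near-zero singular behavior of $\partial^k V$ with rapid decay at infinity. The plan is to obtain the near-zero bound from Lemma~\ref{lemma-V-eps-high-order} and the far-field decay from Lemma~\ref{lemma-V-eps-high-order-decay}, then splice them together by splitting the domain into $|r|\leqslant 1$ and $|r|\geqslant 1$.

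On the region $|r|\leqslant 1$, I would invoke Lemma~\ref{lemma-V-eps-high-order} applied with $K=k+1$: taking the index $k$ in estimate~(\ref{V-est}) of that lemma equal to $k+1$, we get
\begin{equation*}
  |\partial^k V(r)| \leqslant \frac{C_k}{|r|^{k+1-\varepsilon}}.
\end{equation*}
Since $(1+|r|)^{-\ell}$ is bounded between two positive constants on $|r|\leqslant 1$, this is precisely the required bound~(\ref{V-est-final}) on this region, for every $\ell\geqslant 0$.

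On the region $|r|\geqslant 1$, I would use Lemma~\ref{lemma-V-eps-high-order-decay}: its estimate~(\ref{V-est-super}) gives $|\partial^k V(r)|\leqslant C_{m,k}|r|^{-m}$ for every $m\geqslant 0$. Choosing $m=\ell+k+1-\varepsilon$ (take the ceiling if one prefers an integer), one has on $|r|\geqslant 1$ the comparison
\begin{equation*}
  |r|^{-m} \leqslant 2^{\ell}\, |r|^{-k-1+\varepsilon} (1+|r|)^{-\ell},
\end{equation*}
so estimate~(\ref{V-est-final}) follows on this region as well. Combining the two cases and possibly absorbing constants yields the claim for all $r\ne 0$.

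There is no real obstacle here: both the singular estimate and the rapid decay have been prepared by Lemmas~\ref{lemma-V-eps-high-order} and~\ref{lemma-V-eps-high-order-decay} respectively, and the present lemma is just their amalgamation. The only point to be careful about is that the constants $C_{\ell,k}$ in~(\ref{V-est-final}) depend on both $\ell$ and $k$ (and implicitly on $\varepsilon$), which is consistent with the dependence of the constants appearing in the two auxiliary lemmas.
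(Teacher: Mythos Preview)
Your proposal is correct and is precisely the argument the paper has in mind: the paper's own proof is the single sentence ``The assertion follows from Lemmas~\ref{lemma-V-eps-high-order}, \ref{lemma-V-eps-high-order-decay},'' and you have simply spelled out the obvious way to combine those two bounds via the $|r|\leqslant 1$ / $|r|\geqslant 1$ split.
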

\begin{proof}
  The assertion follows from Lemmas~\ref{lemma-V-eps-high-order}, \ref{lemma-V-eps-high-order-decay}.
\end{proof}

\end{document}